\newcommand{\D}{\,\mathrm{d}}
\theoremstyle{definition}
\newtheorem{Th}{Theorem}[section]
\newtheorem{Prop}[Th]{Proposition}
\newtheorem{Lem}[Th]{Lemma}
\newtheorem{Def}[Th]{Definition}
\theoremstyle{definition}
\newtheorem{rem}[Th]{Remark}
\begin{document}
%
\begin{center}\noindent
\textbf{\Large Special homogeneous curves}\\[2em]
{\fontseries{m}\fontfamily{cmss}\selectfont \large David\ Lindemann}\\[1em] 
{\small
Department of Mathematics, Aarhus University\\
Ny Munkegade 118, Bldg 1530, DK-8000 Aarhus C, Denmark\\
\texttt{david.lindemann@math.au.dk}
}
\end{center}
\vspace{1em}
\begin{abstract}
\noindent
We classify all special homogeneous curves. A special homogeneous curve $\mathcal{H}$ consists of connected components of the hyperbolic points in the level set $\{h=1\}$ of a homogeneous polynomial $h$ in two real variables of degree at least three, and admits a transitive group action of a subgroup $G\subset\mathrm{GL}(2)$ on $\mathcal{H}$ that acts via linear coordinate change.
\end{abstract}
\textbf{Keywords:} affine differential geometry, centro-affine curves, special real geometry, real algebraic curves, homogeneous spaces\\
\textbf{MSC classification:} 53A15, 51N35, 14M17, 53C30 (primary), 53C26 (secondary)
\tableofcontents
\section{Introduction and main results}
	In this work we classify a certain class of homogeneous real curves. We study hyperbolic homogeneous polynomials $h:\mathbb{R}^2\to\mathbb{R}$ of degree $\tau\geq 3$, so that the action of their respective linear automorphism group $G^h\subset\mathrm{GL}(2)$ acts transitively on at least one connected component of $\{h=1\}\cap\{\text{hyperbolic points of }h\}$. Hyperbolicity means that there exists a point $p\in\{h>0\}$, such that $-\partial^2h_p$ is of Minkowski type. In dimension two, $-\partial^2 h_p$ is required to have precisely one positive and one negative eigenvalue. Two polynomials are called equivalent if they are related by a linear transformation, and similarly two connected components of $\{h=1\}\cap\{\text{hyperbolic points of }h\}$ for a given polynomial $h$ are called equivalent if they are related by a linear transformation. Throughout this work we will refer to such homogeneous curves as \textit{special homogeneous curves}. The reason for the term \textit{special} will become clear momentarily. Note that for $\tau=2$ it is a well-known fact that there exists precisely one such polynomial up to equivalence, namely $h=x^2-y^2$. The level set $\{h=1\}$ is the two-sheeted hyperbola and  consists of two connected components that exclusively contain hyperbolic points and are related by a reflection. For $\tau=3$ and $\tau\geq 4$, we are in the realm of projective special real (PSR) curves and generalized projective special real (GPSR) curves, respectively \cite{CHM,L2}. In higher dimensions, PSR and GPSR manifolds are defined analogously. PSR curves have been completely classified in \cite{CHM} and there exists up to equivalence precisely one hyperbolic homogeneous cubic polynomial $h= x^2y$, such that $\{h=1\}\cap\{\text{hyperbolic points of }h\}$ is a homogeneous space. Similar to the case $\tau=2$, $\{x^2y=1\}$ contains only hyperbolic points and the two connected components are also related by a reflection. For $\tau\geq 4$, that is GPSR curves, there is a classification of all hyperbolic homogeneous quartic curves and their respective connected components of $\{h=1\}\cap\{\text{hyperbolic points of }h\}$ \cite{L4}. Up to equivalence there exist exactly two hyperbolic homogeneous quartic polynomials
		\begin{equation*}\label{eqn_hom_quartic_polys}
			h_1= x^4-x^2y^2 + \frac{1}{4}y^4,\quad h_2= x^4-x^2y^2+\frac{2\sqrt{2}}{3\sqrt{3}}xy^3-\frac{1}{12}y^4,
		\end{equation*}
	such that the respective positive level sets contain a homogeneous curve. The set $\{h_1=1\}$ contains four equivalent connected components and $\{h_1=1\}$ contains two equivalent connected components, all of which consisting only of hyperbolic points. For degree $\tau\geq 5$ there is no known general classification, and to our knowledge there has also not been any specific case study. Observe that if $\{h=1\}$ contains a special homogeneous curve $\mathcal{H}$, then $\mathcal{H}$, by being a Riemannian homogeneous space with respect to the centro-affine fundamental form $g=-\partial^2 h|_{T\mathcal{H}\times T\mathcal{H}}$, is closed in the ambient space $\mathbb{R}^2$ \cite[Prop.\,1.8]{CNS} and consists thereby automatically of one or more connected components of $\{h=1\}$.
	
	Our main result is the following complete classification of special homogeneous curves.
	
	\begin{Th}\label{thm_special_hom_curves}
		Let $h:\mathbb{R}^2\to\mathbb{R}$ be a homogeneous polynomial of degree $\tau\geq 3$, such that $\{h=1\}$ contains a special homogeneous curve. Then $h$ is equivalent to
			\begin{equation*}
				h=x^{\tau-k}y^k
			\end{equation*}
		for precisely one $k\in\left\{1,\ldots, \lfloor \frac{\tau}{2}\rfloor\right\}$. The level set $\{h=1\}$ has two equivalent connected components if $k$ is odd or $\tau$ is odd and $k$ is even, and four equivalent connected components if both $\tau$ and $k$ are even. If $\tau$ is odd, $G^h\cong\mathbb{R}\times\mathbb{Z}_2$ with the $\mathbb{Z}_2$-factor acting via $x\to -x$ for $k$ odd and via $y\to -y$ for $k$ even. If $\tau$ is even and $k$ odd, $G^h\cong \mathbb{R}\times\mathbb{Z}_2$ with the $\mathbb{Z}_2$-factor acting via $x\to -x$. If $\tau$ is even, $k$ is even, and $2k\ne\tau$, $G^h\cong\mathbb{R}\times\mathbb{Z}_2\times\mathbb{Z}_2$ with the $\mathbb{Z}_2$-factors acting via $x\to-x$ and $y\to-y$, respectively. Lastly, if $\tau$ is even, $k$ is even, and $\tau=2k$, $G^h\cong(\mathbb{R}\times\mathbb{Z}_2\times\mathbb{Z}_2)\ltimes\mathbb{Z}_2$, with the first three factors acting as in the previous case and the last $\mathbb{Z}_2$-factor acting via $(x,y)\to(y,x)$. For fixed degree $\tau$ of the defining polynomial, there exist precisely $\left\lfloor \frac{\tau}{2}\right\rfloor$ inequivalent special homogeneous curves.
	\end{Th}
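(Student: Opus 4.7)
The plan is to translate transitivity into a Lie-algebra constraint on $h$ and then use hyperbolicity to cut the shape of $h$ down to a single monomial. Since $\mathcal{H}$ is a connected one-dimensional submanifold on which $G^h$ acts transitively, its identity component is transitive as well (its orbits are open in $\mathcal{H}$ and hence must coincide with all of $\mathcal{H}$), so $\dim G^h\geq 1$ and there is a non-zero $A\in\mathfrak{g}^h$. Because linear equivalence of $h$ corresponds exactly to $\mathrm{GL}(2)$-conjugation of $A$, I would first put $A$ into real Jordan normal form, leaving three cases: (i) diagonal $A=\mathrm{diag}(\lambda,\mu)$; (ii) a non-trivial real Jordan block with single eigenvalue $\lambda$; (iii) the rotation-scaling block $\bigl(\begin{smallmatrix}\alpha&-\beta\\\beta&\alpha\end{smallmatrix}\bigr)$ with $\beta\ne 0$.

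The central step is eliminating cases (ii) and (iii) by means of hyperbolicity. In case (ii), the invariance $h(e^{tA}v)=h(v)$ reduces to the identity $h(x+ty,y)=e^{-t\lambda\tau}h(x,y)$; comparing the polynomial-in-$t$ left-hand side with the exponential right-hand side forces $\lambda=0$, and then $h=cy^\tau$, whose Hessian is identically rank-deficient and hence never of Minkowski signature. In case (iii), writing $h(r\cos\theta,r\sin\theta)=r^\tau f(\theta)$, invariance yields $f(\theta+t\beta)=e^{-t\alpha\tau}f(\theta)$; periodicity of $f$ in $\theta$ forces $\alpha=0$ and then $f$ constant, so $h=c(x^2+y^2)^{\tau/2}$ (with $\tau$ necessarily even), whose Hessian is definite. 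In the surviving case (i), the scalar sub-case $\lambda=\mu$ gives only $\lambda=0$ and the sub-case $\lambda\mu=0$ falls back to $h=cx^\tau$ or $cy^\tau$; otherwise the constraint $i\lambda+j\mu=0$ together with $i+j=\tau$ and $i,j\geq 1$ has a unique integer solution $(i,j)=(\tau-k,k)$, so $h=c\,x^{\tau-k}y^k$. Independent rescaling of $x$ and $y$ normalizes $c$ to $1$ --- with $c>0$ forced in the even-even parity sub-case by non-emptiness of $\{h=1\}$, and sign flips absorbing the sign of $c$ otherwise --- and swapping $x\leftrightarrow y$ reduces to $k\in\{1,\ldots,\lfloor\tau/2\rfloor\}$. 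Different $k$ in this range are mutually inequivalent because the unordered multiplicity pair $\{\tau-k,k\}$ of the linear factors of $h$ is a linear invariant.

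Finally, for the normal form $h=x^{\tau-k}y^k$ I would verify existence by exhibiting the 1-parameter subgroup $(x,y)\mapsto(e^{kt}x,e^{-(\tau-k)t}y)$ and computing that the Hessian determinant is strictly negative on $\{h=1\}$, and then read off $G^h$ directly: any $T\in G^h$ preserves the zero divisor $\{x=0\}\cup\{y=0\}$, so by unique factorization $T$ is either diagonal (always) or anti-diagonal (only when $\tau=2k$). The real equation $a^{\tau-k}d^k=1$, together with the analogous relation for the anti-diagonal generator, is then analyzed case-by-case on the parities of $\tau$ and $k$ to extract the stated $\mathbb{Z}_2$-factors and their actions; the connected components of $\{h=1\}$ are counted by sign analysis of $x^{\tau-k}y^k=1$, yielding $4$ when both $\tau$ and $k$ are even and $2$ otherwise. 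The main obstacle I anticipate is the parity bookkeeping in this final step, in particular tracking the semidirect-product structure that appears when $\tau=2k$; by contrast, the Jordan reduction and the rank/signature computations that eliminate cases (ii) and (iii) are structural and routine once the set-up is in place.
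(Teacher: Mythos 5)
Your proposal is correct, and its decisive reduction step is genuinely different from the paper's. The paper never touches the Jordan normal form of the infinitesimal generator: it instead invokes Proposition \ref{prop_intersection} (convexity and precompactness of $(\mathbb{R}_{>0}\cdot\mathcal{H})\cap(p+T_p\mathcal{H})$, imported from \cite{CNS}) to conclude that $\{h=0\}$ contains at least two lines, normalizes these to the coordinate axes so that $h=xyP$, and only then observes that a transitive one-parameter group must preserve both axes and hence be diagonal. You reach the same diagonal normal form by conjugating a non-zero element of $\mathfrak{g}^h$ into one of the three real Jordan types and killing the unipotent-type and rotation-type generators directly: in those cases invariance forces $h=cy^\tau$ or $h=c(x^2+y^2)^{\tau/2}$, whose Hessians are respectively degenerate and definite, contradicting hyperbolicity. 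Your route is self-contained and purely algebraic --- it needs no input about the boundary geometry of the cone $\mathbb{R}_{>0}\cdot\mathcal{H}$ --- at the cost of an extra (routine) case analysis; the paper's route is shorter but leans on non-trivial external machinery. From the diagonal generator onward the two arguments coincide: the condition $(\tau-k)r+ks$ (respectively your $i\lambda+j\mu$) annihilating each coefficient singles out one monomial, hyperbolicity on the positive quadrant is checked via the sign of the Hessian determinant, inequivalence for distinct $k$ follows from the multiplicities of the linear factors being a linear invariant, and $G^h$ is read off from preservation of the zero divisor with the anti-diagonal possibility exactly when $\tau=2k$. The only point to be careful about, which you do address, is that the degenerate solutions $h=cx^\tau$, $h=cy^\tau$ of the diagonal constraint must also be excluded by hyperbolicity; with that in place your argument is complete and delivers the same classification and the same parity bookkeeping for the component count and the structure of $G^h$.
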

	
	Note that a complete classification of homogeneous PSR manifolds has been found in \cite{DV}. For higher-degree polynomials, no general classification result is known. We expect that a most general classification result without restricting either the degree to a specific one, or restricting the dimension, will be very difficult to obtain with currently available mathematical tools.
	
	The main goal of this work is to present a first step in a possible classification of a larger class of higher-dimensional \textit{special homogeneous spaces}, that is homogeneous GPSR manifolds in arbitrary dimension. This is motivated by the classical and nearly completely open problem of classifying homogeneous polynomials in an arbitrary number of real variables. Even when restricting to hyperbolic polynomials, there are only comparatively few known results. PSR curves and surfaces and their defining polynomials have been classified in \cite{CHM} and \cite{CDL}, respectively. PSR manifolds of dimension $n\geq3$ with reducible defining polynomial have been classified in \cite{CDJL}, and homogeneous PSR manifolds have been completely classified in \cite{DV}. For higher degree polynomials, classification results are scarce. There is a classification of hyperbolic quartic hyperbolic homogeneous polynomials in two real variables \cite{L4}, and the difficulty and work needed to obtain this result in comparison with the PSR curves classification was already significantly higher than initially expected. To our knowledge, there is no classification result of hyperbolic quartics in higher dimensions. However, in an extensive work by A.B. Korchagin and D.A. Weinberg \cite{KW} the authors successfully classify all isotopy types of affine and projective quartic curves. For degree of the polynomials greater than four, there are no classification results. Hoping to get to, even in the long term, a complete classification of hyperbolic homogeneous polynomials in arbitrary degree is most likely not realistic. Even in the cubic case, the corresponding moduli space is only slightly better understood when restricting the global geometry of the corresponding PSR manifolds \cite{L2,L3}. We aim nonetheless for a classification of all special homogeneous spaces. The next step after this work is the classification of special homogeneous surfaces. We are currently cautiously optimistic to obtain such a result in a reasonable amount of time.
	
	The topics treated in this work are additionally motivated by special K\"ahler geometry \cite{F,ACD} and by the study of the geometry of K\"ahler cones \cite{DP,W,M}. In special K\"ahler geometry, affine and projective special K\"ahler manifolds are studied. The so-called \textbf{supergravity r-map} allows one to explicitly construct such manifolds from connected special real manifolds, and this construction in particular preserves geodesic completeness \cite{CHM}. But for hyperbolic homogeneous polynomials of degree at least four, no straightforward generalisation of that construction exists. One ansatz to find a good candidate is to study what it should do with special homogeneous spaces, as it might then be of more algebraic nature and should be required to preserve the symmetries in some sense. In the geometry of K\"ahler cones, hyperbolic homogeneous polynomials and their positive level sets appear as follows. Given a compact K\"ahler manifold $X$ of complex dimension $\tau\geq 3$, the real homogeneous polynomial
		\begin{equation*}
			h:H^{1,1}(X,\mathbb{R})\to\mathbb{R},\quad [\omega]\mapsto\int_X \omega^\tau
		\end{equation*}
	is hyperbolic. In particular all points in the \textbf{K\"ahler cone} $\mathcal{K}\subset H^{1,1}(X,\mathbb{R})$, that is the subset of classes in $H^{1,1}(X,\mathbb{R})$ containing a K\"ahler metric, are hyperbolic points of $h$ by the Hodge-Riemann bilinear relations, so $\mathcal{H}=\{h=1\}\cap\mathcal{K}$ is a GPSR manifold. It is in general not known which hyperbolic polynomials can be constructed in this way. A reasonable ansatz would be to try and find a compact K\"ahler manifold leading to polynomials corresponding to special homogeneous curves. We are currently working on that problem and hope to extend possible results to special homogeneous surfaces once they have been successfully classified.
	
	\paragraph*{Acknowledgements} This work was partly supported by the \textit{German Research Foundation} (DFG) under the RTG 1670 ``Mathematics Inspired by String Theory and Quantum Field Theory'', and partly by a ``Walter Benjamin PostDoc Fellowship'' granted to the author by the DFG. The author would like to thank \'Aron Szab\'o for helpful discussions during the initial approach to this project, and Andrew Swann for suggesting that certain standard forms of polynomials introduced in \cite{L2} were not necessarily the most realistic ansatz for this work.
\section{Preliminaries}
	We start by giving rigorous definitions of the considered objects and technical tools needed to prove our results.

	\begin{Def}
		A homogeneous polynomial $h:\mathbb{R}^{n+1}\to\mathbb{R}$ is called \textbf{hyperbolic} if there exists $p\in\{h>0\}$, such that $-\partial^2h_p$ is of Minkowski type. Such a point $p$ is called \textbf{hyperbolic point} of $h$. Two homogeneous polynomials $h,\overline{h}$ are called equivalent if they are related by a linear coordinate change, i.e. there exists $A\in\mathrm{GL}(n+1)$, such that $A^*h=\overline{h}$. A hypersurface $\mathcal{H}$ contained in the level set $\{h=1\}$ of a hyperbolic homogeneous polynomials of degree $\tau\geq 3$ is called \textbf{projective special real (PSR) manifold} for $\tau=3$ and \textbf{generalised projective special real (GPSR) manifold} for $\tau\geq 4$. Two (G)PSR manifolds $\mathcal{H}$ and $\overline{\mathcal{H}}$ are called equivalent if there exists $A\in\mathrm{GL}(n+1)$, such that $A(\overline{\mathcal{H}})=\mathcal{H}$.
	\end{Def}
	
	As in the introduction, we will refer to homogeneous (G)PSR manifolds as \textbf{special homogeneous spaces}, respectively \textbf{special homogeneous curves} in the one-dimensional case.
	
	\begin{rem}
		For two equivalent (G)PSR manifolds $\mathcal{H}\subset\{h=1\}$ and $\overline{\mathcal{H}}\subset\{\overline{h}=1\}$ with $A(\overline{\mathcal{H}})=\mathcal{H}$ for some $A\in\mathrm{GL}(n+1)$, the corresponding defining polynomials are automatically equivalent via $A^*h=\overline{h}$. The converse does in general not hold true, which can be seen by restricting a given (G)PSR manifold to an open subset that is not the entire (G)PSR manifold.
	\end{rem}
	
	Euler's homogeneous function theorem implies that $\D h_p(p)=\tau h(p)$ for all homogeneous polynomials of degree $\tau\geq 3$. Hence, the position vector field $\xi\in\mathfrak{X}(\mathbb{R}^{n+1})$ is transversal to any given (G)PSR manifold. We can thus study (G)PSR manifolds in the setting of centro-affine geometry. The \textbf{centro-affine fundamental form} $g$ of a centro-affine hypersurface $\mathcal{H}$ is defined to be the unique symmetric $(0,2)$-tensor fulfilling the \textbf{centro-affine Gau{\ss} equation}
		\begin{equation*}
			\mathrm{D}_X Y = \nabla^{\mathrm{ca}}_X Y + g(X,Y)\xi
		\end{equation*}
	for all $X,Y\in\mathfrak{X}(\mathcal{H})$. In the above equation, $\mathrm{D}$ denotes the canonical flat connection on $\mathbb{R}^{n+1}$, and $\nabla^{\mathrm{ca}}$ denotes the \textbf{centro-affine connection} which is also uniquely determined by the centro-affine Gau{\ss} equation. Note that the centro-affine connection and the Levi-Civita connection induced by $g$, assuming $g$ being non-degenerate, do in general not coincide. For a reference on affine and centro-affine geometry see \cite{NS}. In the case of (G)PSR manifolds $\mathcal{H}\subset\{h=1\}$, the centro-affine fundamental form is a Riemannian metric and of the form $g=-\frac{1}{\tau}\partial^2h|_{T\mathcal{H}\times T\mathcal{H}}$ \cite[Prop.\,1.3]{CNS}. The Riemannian property follows from the hyperbolicity condition. To see this, observe that for all $p\in\mathcal{H}$, $\mathrm{ker}(\!\D h_p)$ is orthogonal to $\xi_p$ with respect to the Lorentzian metric $-\partial^2h$ on $\mathbb{R}_{>0}\cdot\mathcal{H}$. This follows, again, from Euler's theorem for homogeneous functions. Since $-\partial^2h(\xi,\xi)=-\tau(\tau-1)h$ is negative on $\mathcal{H}$, we deduce that $g$ is indeed positive definite by the hyperbolicity of every point in $\mathcal{H}$.
	
	\begin{Def}
		For a given homogeneous polynomial $h:\mathbb{R}^{n+1}\to\mathbb{R}$ we denote by $G^h\subset\mathrm{GL}(n+1)$ the linear automorphism group of $h$.
	\end{Def}
	
	Note that for any (G)PSR manifold $\mathcal{H}\subset\{h=1\}$, elements in $G^h$ are isometries with respect to the centro-affine fundamental form.
	
	\begin{Def}\label{def_bdr_beh}
		Let $\mathcal{H}\subset\{h=1\}$ be a (G)PSR manifold that is closed in the ambient space $\mathbb{R}^{n+1}$, and denote $U=\mathbb{R}_{>0}\cdot\mathcal{H}$. Then $\mathcal{H}$ is said to have \textbf{regular boundary behaviour} if
			\begin{center}
				\begin{minipage}{0.9\textwidth}
					\begin{enumerate}[(i)]
						\item $\D h_p\ne 0$, \label{eqn_reg_bdr_beh_i}
						\item $\left.-\partial^2 h\right|_{T_p(\partial U\setminus\{0\})\times T_p(\partial U\setminus\{0\})}\geq 0$ and $\dim\ker\left(\left.-\partial^2 h\right|_{T_p(\partial U\setminus\{0\})\times T_p(\partial U\setminus\{0\})}\right)=1$,\label{eqn_reg_bdr_beh_ii}
					\end{enumerate}
				\end{minipage}
			\end{center}
		for all $p\in\partial U\setminus\{0\}$. If condition \hyperref[eqn_reg_bdr_beh_i]{(i)} is violated, independently of the validity of condition \hyperref[eqn_reg_bdr_beh_ii]{(ii)}, $\mathcal{H}$ is called \textbf{singular at infinity}.
	\end{Def}
	
	In the case of closed PSR manifolds, having regular boundary behaviour is equivalent to condition \hyperref[eqn_reg_bdr_beh_i]{(i)} in Definition \ref{def_bdr_beh}, cf. \cite[Thm.\,4.12]{L2}, that is closed PSR manifolds have regular boundary behaviour if and only if they are not singular at infinity. In the case of special homogeneous curves, we will prove the following.
	
	\begin{Lem}\label{lem_homcurves_sing_at_inf}
		Special homogeneous curves are singular at infinity and thereby have non-regular boundary behaviour.
	\end{Lem}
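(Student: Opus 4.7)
The plan is to invoke Theorem \ref{thm_special_hom_curves} and then compute the differential of the normal-form polynomial directly at a boundary point on the $y$-axis.

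I would first observe that singularity at infinity is invariant under linear equivalence: if $A \in \mathrm{GL}(2)$ satisfies $A^*h = \overline{h}$, then $A$ identifies $\partial U$ with $\partial\overline{U}$ and intertwines $\mathrm{d}h$ with $\mathrm{d}\overline{h}$. Therefore, by Theorem \ref{thm_special_hom_curves}, it suffices to treat the standard forms $h = x^{\tau-k}y^k$ for some $k \in \{1, \ldots, \lfloor\tau/2\rfloor\}$. For each such $h$, the zero set $\{h = 0\}$ equals the union of the two coordinate axes, so every connected component of $\{h > 0\}$ is a single open quadrant, and the cone $U = \mathbb{R}_{>0}\cdot\mathcal{H}$ coincides with one of these quadrants. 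Consequently $\partial U \setminus \{0\}$ is the disjoint union of two open coordinate half-axes, one on each axis.

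The second step is a direct calculation:
\begin{equation*}
\mathrm{d}h_{(x,y)} = (\tau-k)\,x^{\tau-k-1}y^k\,\mathrm{d}x + k\,x^{\tau-k}y^{k-1}\,\mathrm{d}y.
\end{equation*}
The key arithmetic point is that $k \leq \lfloor\tau/2\rfloor$ together with $\tau \geq 3$ forces $\tau - k \geq \lceil\tau/2\rceil \geq 2$, hence in particular $\tau - k - 1 \geq 1$. Evaluating at a point $p = (0, y_0)$ with $y_0 \neq 0$, the factors $x^{\tau-k-1}$ and $x^{\tau-k}$ both vanish, so $\mathrm{d}h_p = 0$. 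Since one of the two open $y$-half-axes belongs to $\partial U \setminus \{0\}$ regardless of which quadrant $U$ happens to be, this produces a point of $\partial U \setminus \{0\}$ at which $\mathrm{d}h$ vanishes. By Definition \ref{def_bdr_beh} this means $\mathcal{H}$ is singular at infinity.

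There is no serious obstacle in this argument once Theorem \ref{thm_special_hom_curves} is available: the proof reduces to a derivative computation plus a brief parity check confirming that the $y$-half-axis in question lies on the boundary of the chosen quadrant. The only real work is external, namely the proof of the classification itself. If a proof independent of Theorem \ref{thm_special_hom_curves} were desired, one could instead argue that the one-parameter subgroup of $G^h$ acting transitively on $\mathcal{H}$ is generated by some $X \in \mathfrak{gl}(2,\mathbb{R})$ preserving the ideal of $\{h = 0\}$, forcing the boundary rays of $U$ to be eigendirections of $X$ along which $\mathrm{d}h$ degenerates; but this is a strictly longer detour.
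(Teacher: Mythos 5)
Your proposal is correct and follows essentially the same route as the paper: reduce to the normal forms $h=x^{\tau-k}y^k$ from Theorem \ref{thm_special_hom_curves}, compute $\mathrm{d}h$, and use $\tau-k\geq 2$ to see that $\mathrm{d}h$ vanishes identically on the $y$-axis, which meets $\partial U\setminus\{0\}$. The only difference is that you spell out the equivalence-invariance of singularity at infinity and the identification of $\partial U\setminus\{0\}$ with two coordinate half-axes, which the paper leaves implicit.
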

	
	For the proof of the above lemma see Section \ref{sect_proofs}.
	
	\begin{rem}
		Special homogeneous spaces are closed in the ambient space $\mathbb{R}^{n+1}$. This follows from their completeness as Riemannian manifolds with respect to their centro-affine fundamental form. In fact, any geodesically complete (G)PSR manifold $(\mathcal{H},g)$ is closed in the ambient space $\mathbb{R}^{n+1}$ \cite[Prop.\,1.8]{CNS}. Whether the converse statement holds also true in general is an open question for $\mathrm{deg}(h)\geq 4$. For PSR manifolds the converse does indeed hold true \cite[Thm.\,2.5]{CNS}. Furthermore, it also holds true for all GPSR manifolds with regular boundary behaviour \cite[Thm.\,1.18]{CNS}.
	\end{rem}
	
	For our classification we will bring all considered polynomials to a certain form. In order to do so, we need the following result.
	
	\begin{Prop}\label{prop_intersection}
		Let $\mathcal{H}$ be a closed connected (G)PSR manifold. Then for all $p\in\mathcal{H}$
			\begin{equation*}
				\left(\mathbb{R}_{>0}\cdot\mathcal{H}\right)\cap\left(p+T_p\mathcal{H}\right)\subset\mathbb{R}^{n+1}
			\end{equation*}
		is convex and precompact.
		\begin{proof}
			Follows from \cite[Cor.\,1.11,\,Lem.\,1.14]{CNS}.
		\end{proof}
	\end{Prop}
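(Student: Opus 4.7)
The plan is to use the strict convexity of $\phi:=-\log h$ on the open cone $U:=\mathbb{R}_{>0}\cdot\mathcal{H}$. First I would verify this at a point $q_0\in\mathcal{H}$: decompose $X\in T_{q_0}\mathbb{R}^{n+1}$ as $X=X^T+a\xi_{q_0}$ with $X^T\in T_{q_0}\mathcal{H}=\ker\D h_{q_0}$. Euler's theorem yields $\D h_{q_0}(\xi_{q_0})=\tau$, $\partial^2 h_{q_0}(X^T,\xi_{q_0})=(\tau-1)\D h_{q_0}(X^T)=0$, and $\partial^2 h_{q_0}(\xi_{q_0},\xi_{q_0})=\tau(\tau-1)$. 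Expanding $\partial^2(-\log h)=-\partial^2 h/h+(\D h\otimes\D h)/h^2$ then gives
\begin{equation*}
\partial^2\phi_{q_0}(X,X)=\tau\bigl(g_{q_0}(X^T,X^T)+a^2\bigr),
\end{equation*}
which is positive definite by the Riemannian property of $g$. Scaling invariance of the Hessian of $\phi$ (at $\lambda q_0$ it equals $\lambda^{-2}$ times its value at $q_0$) propagates positive definiteness to all of $U$, so $\phi$ is strictly convex along every line segment lying in $U$.

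For convexity of $\Omega_p:=U\cap(p+T_p\mathcal{H})$, take $q_0,q_1\in\Omega_p$ and consider the affine segment $\sigma(s)=(1-s)q_0+sq_1\subset p+T_p\mathcal{H}$, $s\in[0,1]$. The goal is to show $\sigma([0,1])\subset U$. On each connected component of the open set $\sigma^{-1}(U)\subset[0,1]$, $\phi\circ\sigma$ is strictly convex. At any endpoint $s^*$ with $\sigma(s^*)\notin U$, a local analysis near $\partial U$---using closedness of $\mathcal{H}$ in $\mathbb{R}^{n+1}$ and the fact that $\partial U\subset\{h=0\}\cup\{\det(-\partial^2 h)=0\}$---forces $h\circ\sigma\to 0$, hence $\phi\circ\sigma\to+\infty$, as $s\to s^*$. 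Strict convexity on the component of $\sigma^{-1}(U)$ containing $0$, together with the analogous analysis approaching from $s=1$, then rules out $\sigma^{-1}(U)$ having more than one component, giving $\sigma([0,1])\subset U$.

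For precompactness it suffices to show boundedness along each line through $p$ in $T_p\mathcal{H}$. For $v\in T_p\mathcal{H}\setminus\{0\}$ the polynomial $\phi_v(t):=h(p+tv)$ of degree $\leq\tau$ satisfies $\phi_v(0)=1$, $\phi_v'(0)=\D h_p(v)=0$, and $\phi_v''(0)=-\tau g_p(v,v)<0$. On the maximal interval $I\ni 0$ with $p+tv\in U$, strict convexity of $-\log\phi_v$ combined with its vanishing derivative at $0$ forces $\phi_v$ strictly decreasing on $I\cap(0,\infty)$ and strictly increasing on $I\cap(-\infty,0)$. A non-constant polynomial cannot remain positive while strictly monotone on an unbounded half-line, so $I$ is bounded on both sides; hence $\Omega_p$ is bounded and thus precompact. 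The hard part will be the ``no re-entry'' step of the convexity argument: rigorously excluding that the affine segment $\sigma$ leaves and re-enters $U$ needs a careful analysis of $\partial U$ and the behaviour of $h\circ\sigma$ at the exit points, and this is precisely where closedness of $\mathcal{H}$ in $\mathbb{R}^{n+1}$ enters essentially.
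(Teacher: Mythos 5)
The paper disposes of this proposition by citing \cite[Cor.\,1.11, Lem.\,1.14]{CNS}, so any self-contained argument is necessarily a different route; your Hessian computation showing $\partial^2(-\log h)>0$ on $U$ and your polynomial-growth argument for boundedness are both correct and worth having. However, the convexity part has a genuine gap at exactly the point you flag as ``the hard part'', and the problem is not merely that the step needs care: the mechanism you propose cannot close it. A strictly convex function on a half-open interval $[0,\beta_0)$ may be finite at $0$ and tend to $+\infty$ as $s\to\beta_0^-$ (e.g.\ $s\mapsto-\log(\beta_0-s)$), so strict convexity of $\phi\circ\sigma$ on the component of $\sigma^{-1}(U)$ containing $0$, combined with the blow-up at that component's right endpoint, yields no contradiction, and likewise from $s=1$. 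More tellingly, your argument nowhere uses connectedness of $\mathcal{H}$, and without it the conclusion is false: for $h=x^2y$ and $\mathcal{H}=\{h=1\}$ (both branches), the cone $U=\{x\neq 0,\,y>0\}$ satisfies every ingredient you invoke ($\mathcal{H}$ closed, $-\log h$ strictly convex on $U$, $h\to 0$ at $\partial U$), yet with $p=(1,1)$ the set $U\cap(p+T_p\mathcal{H})=\{(1+v,1-2v):v<\tfrac12,\,v\neq-1\}$ is disconnected and a segment joining its two pieces leaves and re-enters $U$. The ``no re-entry'' step genuinely requires a global convexity argument of the kind carried out in \cite{CNS} (and in \cite{CHM} for the cubic case): local strict convexity of $\mathcal{H}$ (positive definiteness of $g$), closedness, \emph{and} connectedness together imply that $\mathcal{H}$ lies on one side of each of its affine tangent hyperplanes, whence $U$ is an intersection of half-spaces and in particular convex. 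That input cannot be replaced by the one-dimensional blow-up argument.

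Two smaller points. First, the boundary fact you need is cleaner than the one you state: closedness of $\mathcal{H}$ gives $\partial U\setminus\{0\}\subset\{h=0\}$ outright (if $q=\lim\lambda_np_n$ with $p_n\in\mathcal{H}$, $\lambda_n>0$, and $h(q)>0$, then $\lambda_n\to h(q)^{1/\tau}>0$, so $p_n$ converges to a point of the closed set $\mathcal{H}$ and $q\in U$, contradicting $q\in\partial U$); your weaker statement $\partial U\subset\{h=0\}\cup\{\det(-\partial^2h)=0\}$ would not force $h\circ\sigma\to0$ if the exit point lay only in the second set. Second, in the precompactness step, passing from boundedness of each interval $I_v$ to boundedness of $\Omega_p$ uses both the previously established convexity and the standard fact that an unbounded open convex set containing $p$ must contain a full ray emanating from $p$; this should be said, though it is routine. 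The polynomial argument itself --- a non-constant polynomial cannot be positive and strictly decreasing on an unbounded half-line --- is correct.
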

	
	Since special homogeneous curves are automatically closed, the above in particular applies to each of their connected components.
	
\section{Proofs of Theorem \ref{thm_special_hom_curves} and Lemma \ref{lem_homcurves_sing_at_inf}}\label{sect_proofs}
	\begin{proof}[Proof of Theorem \ref{thm_special_hom_curves}]
		Using Proposition \ref{prop_intersection} we obtain that $\{h=0\}$ contains at least two lines. Thus we can without loss of generality assume that $\{x=0\}\subset\{h=0\}$ and $\{y=0\}\subset\{h=0\}$. Using that special homogeneous curves are necessarily closed in their ambient space we might further assume that $h$ is hyperbolic on $\{x>0,\,y>0\}$ and that the connected component of $\{h=1\}\cap\{x>0,\,y>0\}$ is a special homogeneous curve. Then $h$ is of the form
			\begin{equation*}
				h=xyP,
			\end{equation*}
		where $P$ is a homogeneous polynomial in $x$ and $y$ of degree $\mathrm{deg}(h)-2$. Then there exists a connected one-dimensional subgroup $G$ of $\mathrm{GL}(2)$ acting transitively on $\{h=1\}\cap\{x>0,\,y>0\}$. Note that $G\subset G^h$. This means that the action of $G$ must leave both $\{x=0\}$ and $\{y=0\}$ invariant. Hence, $G$ has an infinitesimal generator of the form
			\begin{equation*}
				a=\left(\begin{array}{cc}
					r & 0 \\
					0 & s
				\end{array}\right)
			\end{equation*}
		for some $r,s\in\mathbb{R}$ with $rs\ne 0$. By writing $h$ in the form
			\begin{equation*}
				h=\sum\limits_{k=1}^{\tau-1} f_{k} x^{\tau-k}y^k,
			\end{equation*}
		$f_k\in\mathbb{R}$ for all $1\leq k\leq \tau-1$, we find that $a$ is an infinitesimal symmetry of $h$ if and only if
			\begin{equation*}
				\left(\sum\limits_{k=1}^{\tau-1} (\tau-k)f_{k} x^{\tau-k}y^k\right)r + \left(\sum\limits_{k=1}^{\tau-1} kf_{k} x^{\tau-k}y^{k}\right)s = 0.\label{eqn_inf_sym_h_curves}
			\end{equation*}
		The above is equivalent to $((\tau-k)r+ks)f_k=0$ for all $1\leq k \leq \tau-1$. Since $k\mapsto\frac{k-\tau}{k}$ is strictly monotonously increasing in $k>0$, we deduce that $f_k\ne 0$ for at most one $k\in\{1,\ldots,\tau-1\}$. If all $f_k$ vanish identically, $h\equiv 0$, which is not a hyperbolic polynomial, and by the assumption that $h$ is positive on $\{x>0,\,y>0\}$ we obtain after rescaling $x$ or $y$ with a positive factor if necessary that $h$ is equivalent to $h=x^{\tau-k}y^k$ for some $k\in\{1,\ldots,\tau-1\}$. After possibly switching variables, we can further assume that $k\in\left\{1,\ldots,\left\lfloor \frac{\tau}{2}\right\rfloor\right\}$. We now need to show that each such $h$ is actually hyperbolic on $\{x>0,\,y>0\}$. To do so, consider for $t\in[0,1]$
			\begin{align*}
				-\partial^2h_{\left(\begin{smallmatrix}t \\ 1-t\end{smallmatrix}\right)}&=-\left(\begin{array}{cc}
					(\tau-k)(\tau-k-1) t^{\tau-k-2}(1-t)^k & (\tau-k)kt^{\tau-k-1}(1-t)^{k-1}\\
					(\tau-k)k t^{\tau-k-1}(1-t)^{k-1} & k(k-1)t^{\tau-k}(1-t)^{k-2}
				\end{array}\right),\\
				\det\left(-\partial^2h_{\left(\begin{smallmatrix}t \\ 1-t\end{smallmatrix}\right)}\right) &=(\tau-k)k(1-\tau)t^{2(\tau-k-1)}(1-t)^{2(k-1)}.
			\end{align*}
		One sees that $\det\left(-\partial^2h_{\left(\begin{smallmatrix}t \\ 1-t\end{smallmatrix}\right)}\right)$ is negative for all $t\in(0,1)$, meaning by dimensional reason and homogeneity of $h$ that $-\partial^2h$ has one negative and one positive eigenvalue on $\{x>0,\,y>0\}$. By the positivity of $h$ on the latter set we have thus shown that for all $k\in\left\{1,\ldots,\left\lfloor \frac{\tau}{2}\right\rfloor\right\}$, every point in $\{x>0,\,y>0\}$ is a hyperbolic point of $h$. For $\tau$ and $k$ fixed, we find that a possible choice for the infinitesimal generator $a$ of $G$ is given by $r=k,\,s=k-\tau$.
		
		It remains to show that for two distinct allowed choices for $k$ the corresponding polynomials are not equivalent. The only linear transformations that leave $\{x=0\}$ and $\{y=0\}$ invariant are compositions of diagonal transformations and $\left(\begin{smallmatrix}0&1\\1&0\end{smallmatrix}\right)$. By the allowed range for $k$ we can exclude transformations switching $x$ and $y$, so that only diagonal transformations are allowed. Any such transformation acts on $x^{\tau-k}y^k$ via rescaling. Hence, two polynomials of the form $h=x^{\tau-k}y^k$ cannot be equivalent for different choices of $k$. The number of inequivalent connected special homogeneous curves with $\mathrm{deg}(h)=\tau$ is thus $\left\lfloor \frac{\tau}{2}\right\rfloor$.
		
		Next, we will determine the number of connected components of each set $\{h=1\}$. For $h=x^{\tau-k}y^k$, $k\in\left\{1,\ldots,\left\lfloor \frac{\tau}{2}\right\rfloor\right\}$, observe that $\{h=1\}$ has two connected components if either $\tau$ is odd, or $\tau$ is even and $k$ is odd. If $\tau$ is even and $k$ is even, $h$ is non-negative and $\{h=1\}$ has four connected components. See Figure \ref{fig_examples_ccs} for examples of plots of $\{h=1\}$ in each case.
			\begin{figure}[H]
			    \centering
			    \begin{subfigure}[b]{0.23\textwidth}
			        \includegraphics[width=\textwidth]{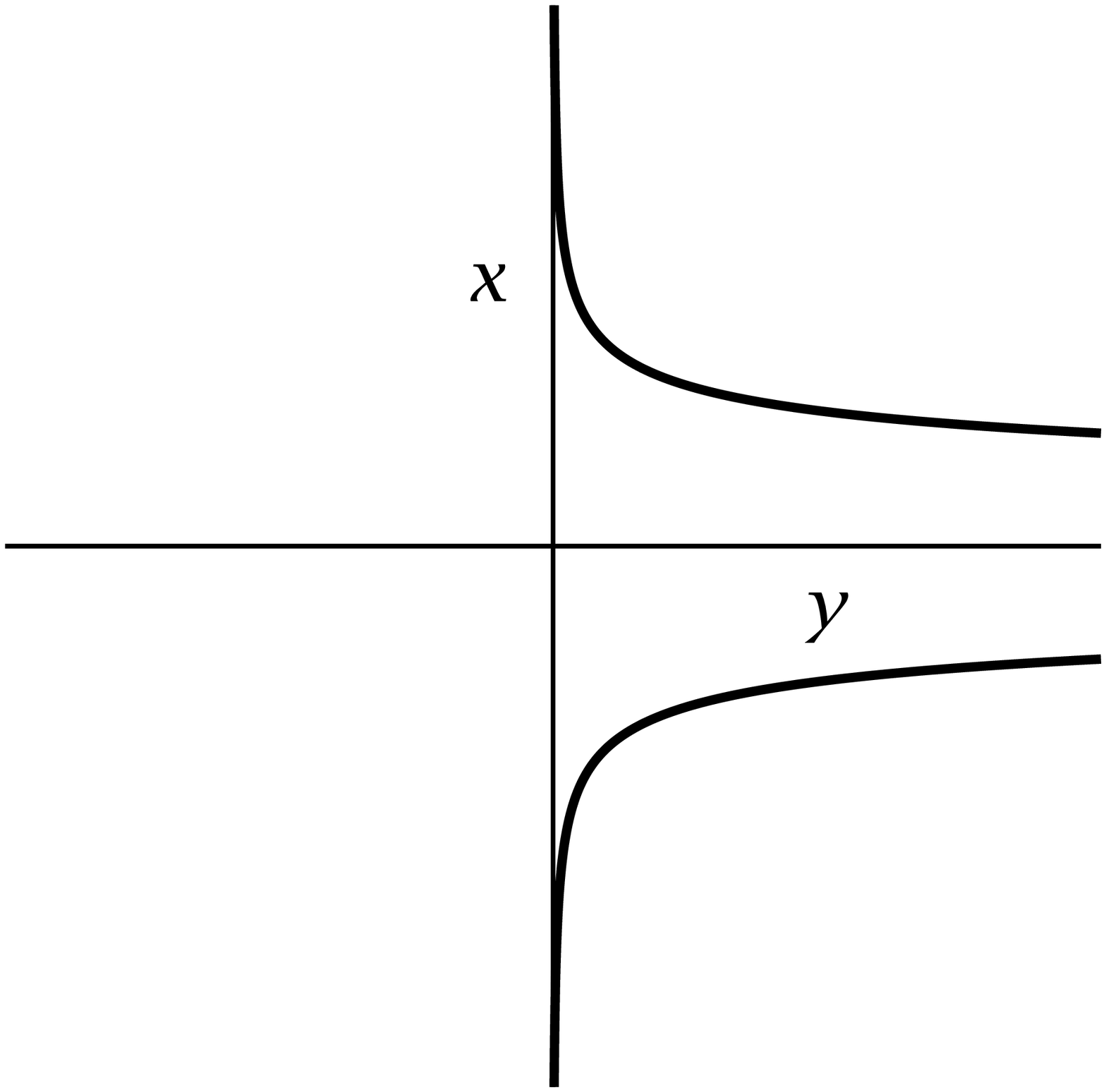}
			        \caption{$\tau=5$, $k=1$}
			        \label{fig_examples_ccs_a}
			    \end{subfigure}
			    \
			    \begin{subfigure}[b]{0.23\textwidth}
			        \includegraphics[width=\textwidth]{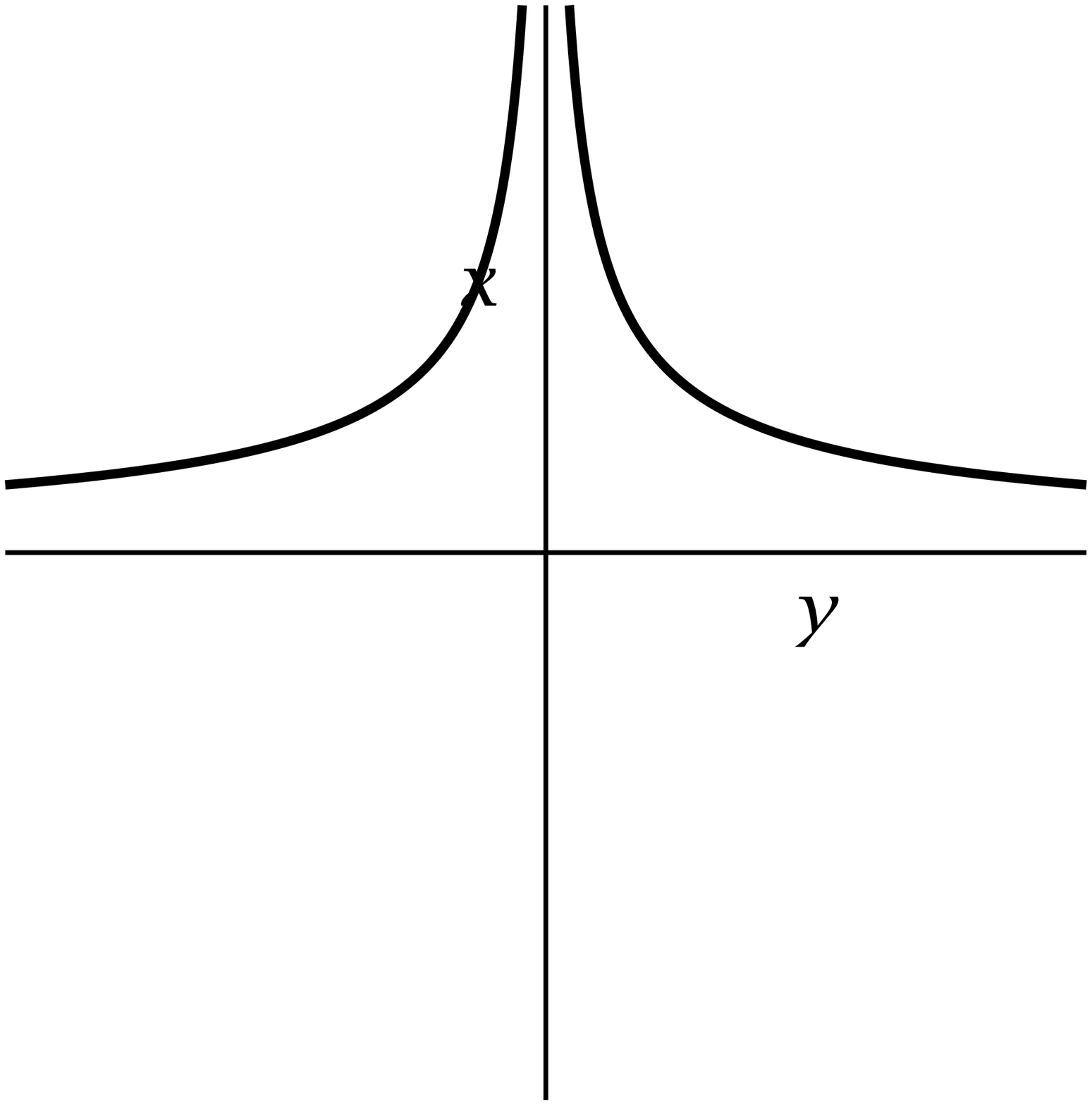}
			        \caption{$\tau=5$, $k=2$}
			        \label{fig_examples_ccs_b}
			    \end{subfigure}
			    \
			    \begin{subfigure}[b]{0.23\textwidth}
			        \includegraphics[width=\textwidth]{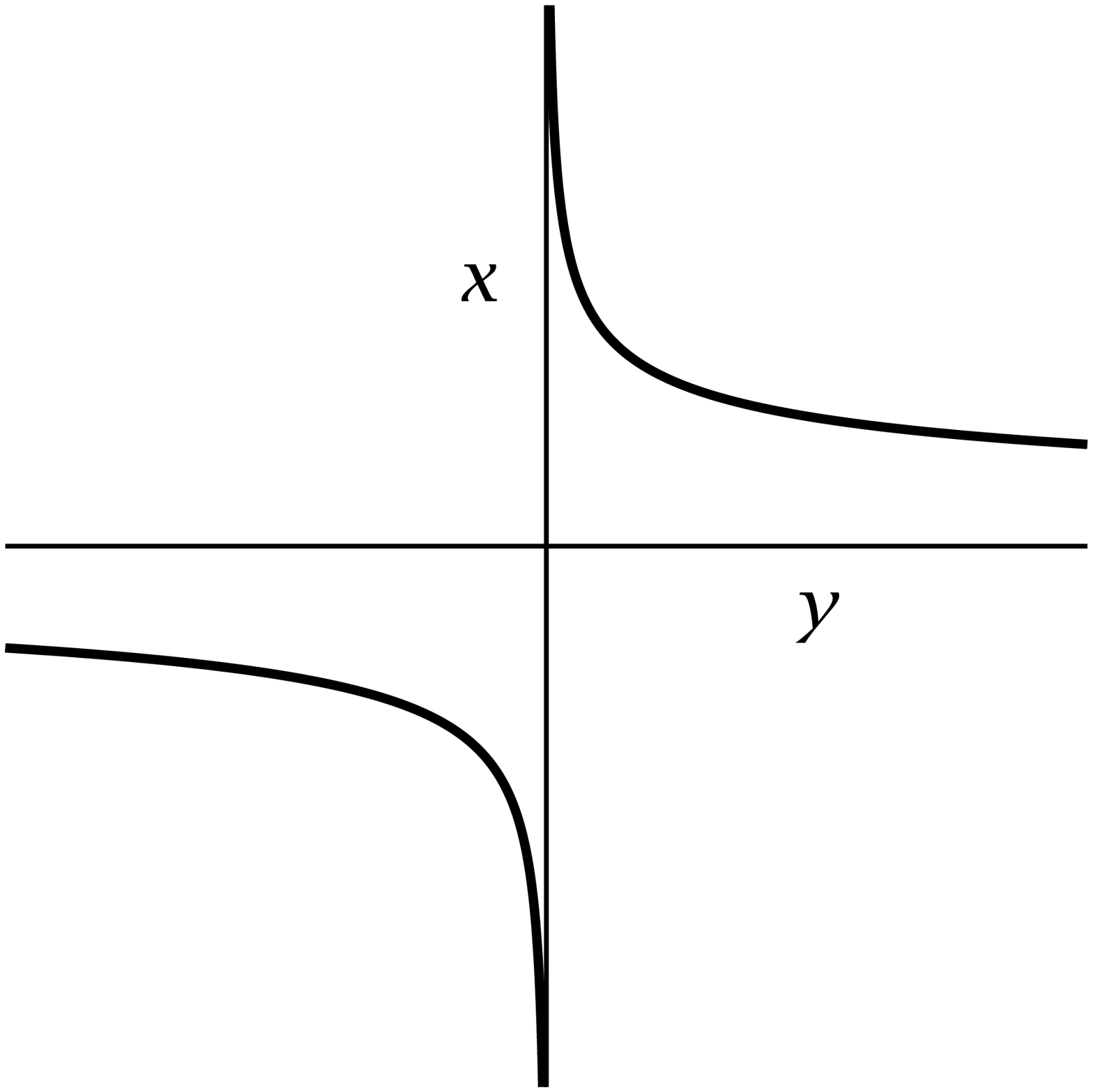}
			        \caption{$\tau=4$, $k=1$}
			        \label{fig_examples_ccs_c}
			    \end{subfigure}
			    \
			    \begin{subfigure}[b]{0.23\textwidth}
			        \includegraphics[width=\textwidth]{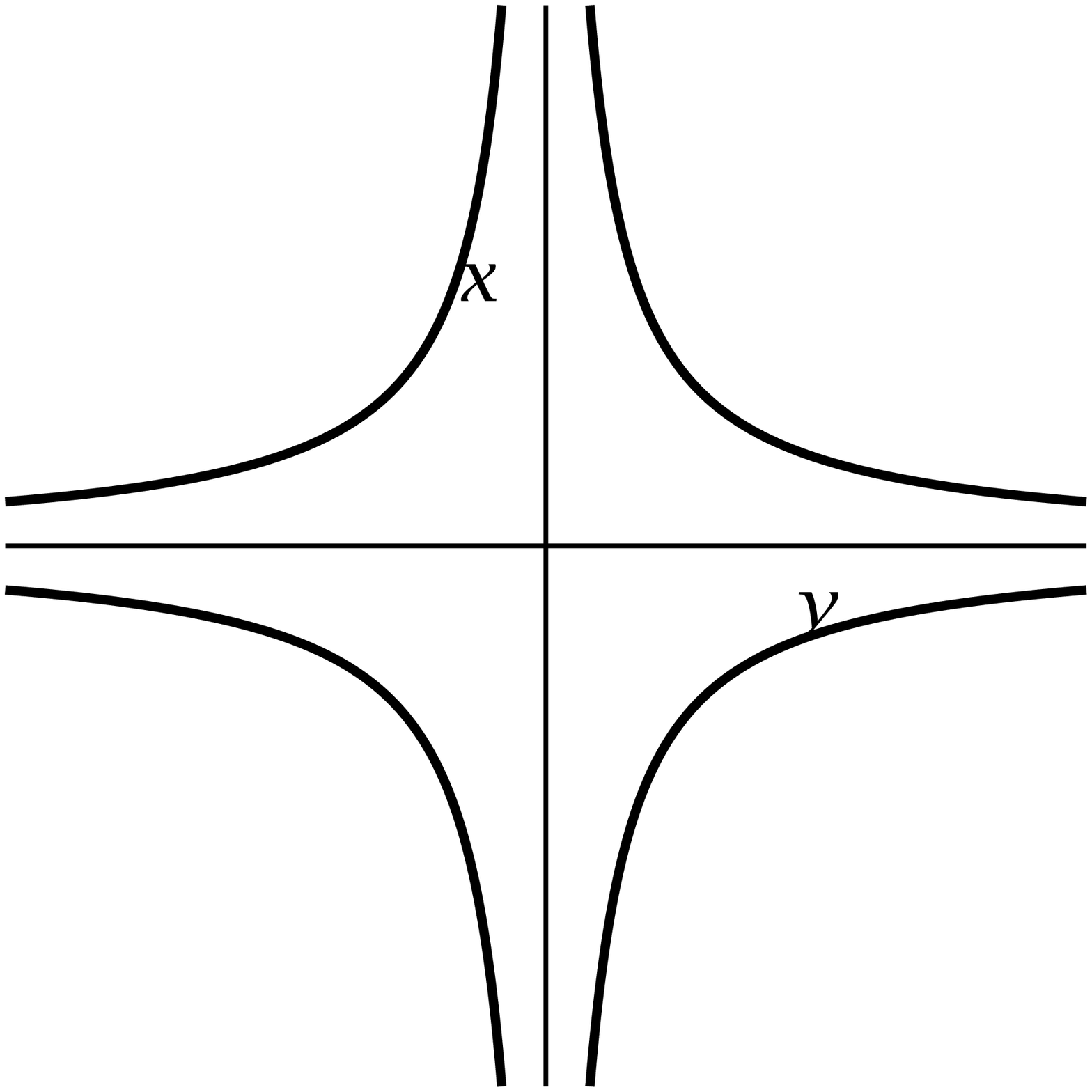}
			        \caption{$\tau=4$, $k=2$}
			        \label{fig_examples_ccs_d}
			    \end{subfigure}
			    \caption{$\{h=1\}$ for different values of $\tau$ and $k$.}\label{fig_examples_ccs}
			\end{figure}
		\noindent
		One quickly checks that the connected components of $\{h=1\}$ are equivalent independently of the choice of $\tau$ and $k$ via combinations of $x\to-x$, $y\to-y$ and $x\leftrightarrow y$, and thus in particular contain exclusively hyperbolic points. In order to determine the automorphism groups $G^h$, the only case that one has to be careful with is $k$ even and $\tau=2k$. In that case we obtain an additional symmetry given by $(x,y)\to(-x,-y)$ which does not commute with e.g. $x\to-x$. Hence, in these cases, $G^h$ is a semi-direct product of $\mathbb{R}\times\mathbb{Z}_2\times\mathbb{Z}_2$ and $\mathbb{Z}_2$, the last factor acting by switching $x$ and $y$.
	\end{proof}
	
	\begin{rem}
		As mentioned in the introduction, it is in general an open and most likely very difficult problem to classify all special homogeneous spaces of dimension higher than one. For special homogeneous curves we have used that the corresponding sets $\{h=0\}$ are easy to control. For higher dimensional spaces it is however much more difficult to control the real projective algebraic varieties $\{h=0\}$, even in the comparatively most likely easiest of the open cases, $\mathrm{deg}(h)=4$. Note however that the classification of homogeneous PSR manifolds of any dimension in \cite{DV} did not require complete control over $\{h=0\}$.
	\end{rem}
	
	\begin{proof}[Proof of Lemma \ref{lem_homcurves_sing_at_inf}]
		For any $h=x^{\tau-k}y^k$ as in Theorem \ref{thm_special_hom_curves}, we have $\D h = (\tau-k) x^{\tau-k-1}y^k \D x + kx^{\tau-k}y^{k-1} \D y$. One of the connected components of $\{h=1\}$ is contained in, and spans, $\{x>0,\,y>0\}$. By $\tau-k\geq 2$, $\D h$ vanishes identically on $\{x=0\}$. This shows that every special homogeneous curve is singular at infinity.
	\end{proof}

\end{document}